\newtheorem{thm}{Theorem}
\newtheorem{remark}[thm]{Remark}
\newcommand{\R}{{\mathbb R}}
\newcommand{\D}{{\mathbb D}}
\newcommand{\E}{{\mathbb E}}
\newcommand{\ignore}[1]{}
\newcommand{\email}[1]{\href{mailto:#1}{\nolinkurl{#1}}}
\title{ Gradient Flows as Optimal Controlled Evolutions:\\ From $\R^n$ to  Wasserstein product spaces*}
\author{Yongxin Chen\thanks{
Aerospace Engineering, Georgia Institute of Technology, Atlanta, GA 30332, USA (\email{yongchen@gatech.edu}).}
\and Tryphon T. Georgiou\thanks{Mechanical and Aerospace Engineering, University of California, Irvine, CA 92697, USA (\email{tryphon@uci.edu}).}
\and Michele Pavon\thanks{Division of Science, New York University Abu Dhabi, U.A.E.\ and Mathematics Department, University of Padova, Italy (\email{michele.pavon@nyu.edu}).}}
\def\spacingset#1{\def\baselinestretch{#1}\small\normalsize}
\begin{document}

\maketitle
\thispagestyle{empty}
\pagestyle{empty}
\begin{abstract}
%
%
We show that the continuous-time gradient descent in $\mathbb{R}^n$ can be viewed as an optimal controlled evolution for a suitable action functional; a similar result holds for stochastic gradient descent. We then provide an analogous characterization for the Wasserstein gradient flow of the (relative) entropy, with an action that mirrors the classical case where the Euclidean gradient is replaced by the {\em Wasserstein gradient} of the relative entropy. In the small-step limit, these continuous-time actions align with the Jordan–Kinderlehrer–Otto scheme. Next, we consider gradient flows for the relative entropy over a {\em Wasserstein product space} -- a study motivated by the stochastic-control formulation of {\em Schr\"odinger bridges}. We characterize the product-space steepest descent as the solution to a variational problem with two control velocities and a product-space Wasserstein gradient, and we show that the induced fluxes in the two components are equal and opposite. This framework suggests applications to the optimal control evolution of microrobotic swarms that can communicate their present distribution to the other swarm.
\end{abstract}

\section{Introduction}
We begin by showing that the continuous-time gradient  descent in $\R^n$ is the solution of a suitable Calculus of Variations problem. This elementary result is apparently new. The characterization  appears at first sight of limited use as the Lagrangian involves the norm squared of the gradient of the function to be minimized. Nevertheless, it possesses all the advantages of the Lagrangian formulation of classical mechanics \footnote{The Lagrangian of the action functional is a {\em scalar} function. The Lagrangian formulation allows one to use any convenient coordinate system. It connects symmetries to conservation laws (Noether's like theorems). It can be easily extended to more general settings as already shown in this paper.}
Moreover, this representation might turn out to be helpful for approximation problems or for theoretical investigations on continuous-time models of gradient descent compared to other methods, see e.g., \cite{RYCD,WJBLKY}. 
In particular, it might be an effective way to study {\em conserved quantities} (see footnote) \cite{DHL,FSRV}.

Suitable ``weighted" versions of this control problem allow us to derive similar results  for Newton's method and for  stochastic gradient descent, see the Appendix.   We then turn to gradient flows on Wasserstein space. As is well known, this topic, after the original applications to the long time behavior of solutions of several important PDEs, such as the rate of convergence to equilibrium, is now experiencing renewed interest because of statistical applications; see, e.g., \cite{CNR}. We focus here on minimizing a {\em relative entropy} functional. Again, we are able to characterize the gradient flow as solution to  a suitable optimal control problem for evolutions on Wasserstein space. The variational problem can be seen as a natural extension to this abstract setting of the calculus of variations problem on $\R^n$. Thus, there is a sort of {\em canonical form} of variational problems that characterize gradient flows. We finally turn to  flows on Wasserstein product spaces. We are  motivated by the stochastic control formulation of Schr\"odinger Bridges Theory; see, e.g., \cite[4.4]{CGP7}.  We derive a gradient flow in the Wasserstein product space. We find the remarkable property that the fluxes in the two components are {\em opposite}. Plugging in the ``steepest descent" into the evolution of the relative entropy we get what appears to be a new formula (\ref{REFF}): The two flows approach each other at a faster rate than that of two solutions of the same Fokker-Planck. In this case, we are also able to show that the gradient descent induced by the Wasserstein gradient of the relative entropy on the product space is the optimal evolution for a control problem, which naturally extends the previous ones. This result may be applied to the control of microrobotic swarms that can exchange information \cite{ZMAF,ZMAF2}.  
A good reference for all the topics considered in this paper is \cite{SAN}. The development of Section \ref{REWP} was first presented at the 2016 MTNS conference \cite{CGP6}. 

The outline of the paper is as follows. In the next section, we present a variational characterization of gradient descent.   In Section \ref{OMT}, we recall some fundamental results on both the static and dynamic formulation of Optimal Mass Transport. In Section \ref{FPGR}, we review the classical results on viewing the Fokker-Planck equation as a Gradient Flow in Wasserstein Space ${\cal W}_2$. This flow is shown in Section \ref{FPC} to be the optimal evolution for a suitable control problem over ${\cal W}_2$.
In Section \ref{REWP}, 
we derive a gradient flow in the Wasserstein product space.  Finally, in Section \ref{GDWPS}, we  provide a variational characterization of the gradient flow of the previous section as the optimal evolution of a control problem in ${\cal W}_2\times{\cal W}_2$. In the Appendix, we consider suitable weighted versions of the control problem leading to variational characterizations of Newton's method and of stochastic gradient descent.

\section{Gradient  descent as optimal controlled evolution}\label{GD}
\begin{thm}\label{thm1}Let $f$ be a $C^1$ function on $\R^n$. Assume that $\nabla f$ is Lipschitz continuous. Consider the corresponding {\em Gradient Flow} on $[0,T]$
\begin{equation}\label{GD} \dot{x}(t)=-\nabla f(x(t)),\quad x(0)=x_0.
\end{equation}
Let ${\cal X}$ denote the family of $C^1[0,T]$, $\R^n$-valued paths such that $x(0)=x_0$.  Let $\cal U$ be the family of continuous, $\R^n$-valued control functions on $[0,T]$.
Consider the following calculus of variations/optimal control problem:
\begin{eqnarray}\label{CV1}&&{\rm min}_{(x,u)\in{\cal X}\times{\cal U}}J(x,u)={\rm  min}_{(x,u)\in{\cal X}\times {\cal U}}\left[\int_0^T\left(\frac{1}{2}\|\nabla f(x(t))\|^2+\frac{1}{2}\|u(t)\|^2\right)dt +f(x(T))\right]\\
&&\dot{x}=u, \quad x(0)=x_0.\label{CV2}
\end{eqnarray}
Then, (\ref{GD}) is the optimal evolution for (\ref{CV1})-(\ref{CV2}).
\end{thm}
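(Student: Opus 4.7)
The plan is to reduce the problem to a completion of squares, exploiting the algebraic identity $\tfrac{1}{2}\|a\|^2+\tfrac{1}{2}\|b\|^2 = \tfrac{1}{2}\|a+b\|^2 - \langle a,b\rangle$, which is essentially the same trick that underlies the logarithmic/Hopf--Cole transformation and the proof of verification theorems in stochastic control. First I would rewrite the integrand of $J$ with $a = u(t)$ and $b = \nabla f(x(t))$:
\begin{equation*}
\frac{1}{2}\|\nabla f(x(t))\|^2 + \frac{1}{2}\|u(t)\|^2 \;=\; \frac{1}{2}\|u(t)+\nabla f(x(t))\|^2 \;-\; \langle u(t),\nabla f(x(t))\rangle.
\end{equation*}

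The crucial observation is that, along any admissible pair $(x,u)\in\mathcal{X}\times\mathcal{U}$ satisfying $\dot x = u$, the cross term is an exact time derivative: $\langle u(t),\nabla f(x(t))\rangle = \langle \dot x(t),\nabla f(x(t))\rangle = \tfrac{d}{dt}f(x(t))$. Integrating from $0$ to $T$ and combining with the terminal cost therefore telescopes the contribution of $f$:
\begin{equation*}
J(x,u) \;=\; \int_0^T \frac{1}{2}\|u(t)+\nabla f(x(t))\|^2\,dt \;-\; \bigl(f(x(T)) - f(x_0)\bigr) \;+\; f(x(T)) \;=\; \int_0^T \frac{1}{2}\|u(t)+\nabla f(x(t))\|^2\,dt \;+\; f(x_0).
\end{equation*}
Since $f(x_0)$ is a constant independent of the control, the functional $J$ is manifestly nonnegative up to the additive constant $f(x_0)$, and attains its infimum precisely when the integrand vanishes identically, i.e.\ when $u(t) = -\nabla f(x(t))$ for almost every $t$.

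Finally I would check that this candidate optimum is admissible and unique. Substituting $u(t) = -\nabla f(x(t))$ into $\dot x = u$ yields the Cauchy problem \eqref{GD}, whose Lipschitz assumption on $\nabla f$ guarantees a unique $C^1$ solution on $[0,T]$; hence the pair $(x^\star,u^\star)$ with $u^\star = -\nabla f(x^\star)$ lies in $\mathcal{X}\times\mathcal{U}$ and achieves $J(x^\star,u^\star)=f(x_0)$, which is the infimum by the identity above. Uniqueness of the minimizer in $\mathcal{U}$ follows from strict convexity of $\tfrac{1}{2}\|\,\cdot\,\|^2$ in the rewritten integrand.

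Nothing in the argument is technically hard; the only point requiring minor care is the legitimacy of applying the fundamental theorem of calculus to $t\mapsto f(x(t))$, which is immediate since $x\in C^1[0,T]$ and $f\in C^1(\R^n)$. The conceptual content is entirely in recognizing that the Lagrangian $\tfrac12\|\nabla f(x)\|^2+\tfrac12\|u\|^2$ plus the terminal payoff $f(x(T))$ is, modulo a boundary constant, a perfect square in the discrepancy $u+\nabla f(x)$ between the actual velocity and the steepest-descent velocity — a formulation that generalizes transparently to the Wasserstein and product-Wasserstein settings treated later in the paper.
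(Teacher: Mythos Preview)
Your proof is correct and is essentially the same argument as the paper's: the paper packages the identity $\int_0^T u\cdot\nabla f(x)\,dt - f(x(T)) + f(x_0) = 0$ (on feasible pairs) as a ``Lagrange functional'' $\Lambda^S$ with $S=f$, then minimizes $J+\Lambda^f$ pointwise in $u$, which is exactly your completion of squares carried out in slightly different language. Your presentation is a bit more direct, while the paper's Lagrange-functional framing is chosen to make the extension to the Wasserstein and product-Wasserstein settings in later sections look uniform.
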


\begin{proof}   
For $S(x)$  a $C^1$ function on $\R^n$, define
\[\Lambda^S(x,u)=\int_0^Tu(t)\cdot\nabla S(x(t))dt-S(x(T))+S(x(0)).\]
Since $\Lambda^S$ is identically zero when the constraint $\dot{x}=u$ is satisfied in $[0,T]$, it is a {\em Lagrange functional} \cite{KP} for our control problem.  As the second step in the Lagrange procedure, we study the {\em unconstrained} minimization of $J+\Lambda^S$ on $\cal X\times\cal U$. For each fixed path $x\in\cal X$, the pointwise minimization of the integrand of $J+\Lambda^S$ yields
\[u_x^*(t,x(t))=-\nabla S(x(t)).\]
Plugging this back into $J+\Lambda^S$, we get
\[(J+\Lambda^S)(x,u_x^*)=\int_0^T\left[\frac{1}{2}\|\nabla f(x(t))\|^2-\frac{1}{2}\|\nabla S(x(t))\|^2\right]dt+f(x(T))-S(x(T))+S(x(0)).\]
If $S$ satisfies
\[S(x)=f(x),\]
we get $(J+\Lambda^{f})(x,u)\equiv f(x(0))$ which is constant over $\cal X$. Thus, minimization with respect to $x\in \cal X$ of $J+\Lambda^{f}$ is not necessary. Since $x^*$ defined by
\[\dot{x}^*(t)=u_x^*(t,x^*(t))=-\nabla f(x^*(t)),\quad x^*(0)=x_0
\]
satisfies the constraint (\ref{CV2}), the pair $(x^*,u^*)$ solves (\ref{CV1})-(\ref{CV2}) and $\{x^*(t);0\le t\le T\}$ is the optimal evolution. 
\end{proof}
Computing the convective derivative (derivative along the stream lines) of $f$ along the optimal path, we get
\begin{equation}\label{decrease}\frac{d f}{dt}(x^*(t))=\nabla f(x^*(t))\cdot\dot{x}^*=-\|\nabla f(x^*(t))\|^2.
\end{equation}
Thus, as is well-known, $f$ decreases along the optimal path. 

In the Appendix, considering suitable “weighted” versions of Problem (\ref{CV1})-(\ref{CV2}), we obtain results similar to Theorem \ref{thm1}  for
Newton’s method and for stochastic gradient descent.


\section{Elements of optimal mass transport theory}\label{OMT}
There exist excellent monographs and survey papers on this topic, see \cite{RR,E,Vil,AGS,Vil2,AG,OPV, S,PC,FG,CNR}, to which we refer the reader. We shall only briefly review some concepts and results that are relevant for the topics of this paper.
\subsection{The static problem}

Let $\nu_0$ and $\nu_1$ be probability measures on the measurable spaces $X$ and $Y$, respectively. Let $c:X\times Y\rightarrow [0,+\infty)$ be a measurable map with $c(x,y)$ representing the cost of transporting a unit of mass from location $x$ to location $y$.  Let ${\cal T}_{\nu_0\nu_1}$ be the family of measurable maps $T:X\rightarrow Y$ such that $T\#\nu_0=\nu_1$, namely such that $\nu_1$ is the {\em push-forward} of $\nu_0$ under $T$. Then Monge's optimal mass transport problem (OMT) is
\begin{equation}\label{monge}\inf_{T\in {\cal T}_{\nu_0\nu_1}}\int_{X\times Y} c(x,T(x))d\nu_0(x).
\end{equation}
As is well known, this problem may not be feasible, namely the family ${\cal T}_{\nu_0\nu_1}$ may be empty. This is never the case for the ``relaxed" version of the problem studied by Kantorovich in the 1940's
\begin{equation}\label{kantorovich}
\inf_{\pi\in\Pi(\nu_0,\nu_1)}\int_{X\times Y} c(x,y)d\pi(x,y)
\end{equation}
where $\Pi(\nu_0,\nu_1)$ are ``couplings" of $\nu_0$ and $\nu_1$, namely probability distributions on $X\times Y$ with marginals $\nu_0$ and $\nu_1$. Indeed, $\Pi(\nu_0,\nu_1)$ always contains the product measure $\nu_0\otimes\nu_1$. Let us specialize the Monge-Kantorovich problem (\ref{kantorovich}) to the case $X=Y=\R^N$ and $c(x,y)=\|x-y\|^2$. Then, if $\nu_1$ does not give mass to sets of dimension $\le N-1$, by Brenier's theorem \cite[p.66]{Vil}, there exists a unique optimal transport plan $\pi$ (Kantorovich) induced by a $d\nu_0$ a.e. unique map $T$ (Monge), $T=\nabla\varphi$, $\varphi$ convex, and we have 
\begin{equation}\label{optmap}\pi=(I\times\nabla\varphi)\#\nu_0, \quad \nabla\varphi\#\nu_0=\nu_1.
\end{equation}
Here $I$ denotes the identity map. Among the extensions of this result, we mention that to strictly convex, superlinear costs $c$ by Gangbo and McCann \cite{GM}.
The optimal transport problem may be used to introduce a useful distance between probability measures. Indeed, let $\mathcal P_2(\R^N)$ be the set of probability measures $\mu$ on $\R^N$ with finite second moment. For $\nu_0, \nu_1\in\mathcal P_2(\R^N)$, the Wasserstein (Vasershtein) quadratic distance is defined by
\begin{equation}\label{Wasserdist}
W_2(\nu_0,\nu_1)=\left(\inf_{\pi\in\Pi(\nu_0,\nu_1)}\int_{\R^N\times\R^N}\|x-y\|^2d\pi(x,y)\right)^{1/2}.
\end{equation}
As is well known \cite[Theorem 7.3]{Vil}, $W_2$ is a {\em bona fide} distance. Moreover, it provides a most natural way to  ``metrize" weak convergence\footnote{$\mu_k$ converges weakly to $\mu$ if $\int_{\R^N}fd\mu_k\rightarrow\int_{\R^N}fd\mu$ for every continuous, bounded function $f$.} in $\mathcal P_2(\R^N)$ \cite[Theorem 7.12]{Vil}, \cite[Proposition 7.1.5]{AGS} (the same applies to the case $p\ge 1$ replacing $2$ with $p$ everywhere). The {\em Wasserstein space} $\mathcal W_2$ is defined as the metric space $\left(\mathcal P_2(\R^N),W_2\right)$. It is a {\em Polish space}, namely a separable, complete metric space.

\subsection{The dynamic problem}

So far, we have dealt with {\em the static} optimal transport problem. Nevertheless, in \cite[p.\ 378]{BB} it is observed that ``...a continuum mechanics formulation was already implicitly contained in the original problem addressed by Monge... Eliminating the time variable was just a clever way of reducing the dimension of the problem". Thus, a {\em dynamic} version of the OMT problem was already {\em in fieri} in Gaspar Monge's 1781 {\em ``M\'emoire sur la th\'eorie des d\'eblais et des remblais"}\,! It was elegantly accomplished by Benamou and Brenier in \cite{BB} by showing that 
\begin{subequations}\label{eq:BBclassical}
\begin{eqnarray}\label{BB1}&&W_2^2(\nu_0,\nu_1)=\inf_{(\mu,v)}\int_{0}^{1}\int_{\R^N}\|v(t,x)\|^2\mu_t(dx)dt,\\&&\frac{\partial \mu}{\partial t}+\nabla\cdot(v\mu)=0,\label{BB2}\\&& \mu_0=\nu_0, \quad \mu_1=\nu_1.\label{boundary}
\end{eqnarray}\end{subequations}
Here, the flow $\{\mu_t; 0\le t\le 1\}$ varies over continuous maps from $[0,1]$ to $\mathcal P_2(\R^N)$ and $v$ over smooth fields.  In \cite{Vil2}, Villani states at the beginning of Chapter $7$ that two main motivations for the time-dependent version of OMT are
\begin{itemize}
\item a time-dependent model gives a more complete description of the
transport;
\item the richer mathematical structure will be useful later on.
\end{itemize}
We can add three further reasons:
\begin{itemize}
\item it opens the way to establish a connection with the {\em Schr\"{o}dinger bridge} problem, where the latter appears as a regularization of the former \cite{W,Mik, mt, MT,leo,leo2,CGP3,CGP4,CGP7,CGP8};
\item it allows us to view the optimal transport problem as an (atypical) {\em optimal control} problem \cite{CGP}-\cite{CGP4}.
\item In some applications such as interpolation of images \cite{CGP5} or spectral morphing \cite{JLG}, the interpolating flow is essential!
\end{itemize}
Let $\{\mu^*_t; 0\le t\le 1\}$ and $\{v^*(x,t); (x,t)\in\R^N\times[0,1]\}$ be optimal for (\ref{eq:BBclassical}). Then 
$$\mu^*_t=\left[(1-t)I+t\nabla\varphi\right]\#\nu_0, $$ with $T=\nabla\varphi$ solving Monge's problem  provides, in McCann's language, the {\em displacement interpolation} between $\nu_0$ and $\nu_1$. Then $\{\mu^*_t; 0\le t\le 1\}$ may be viewed as a constant-speed geodesic joining $\nu_0$ and $\nu_1$ in Wasserstein space (Otto). This formally endows  $\mathcal W_2$ with a ``pseudo-Riemannian" structure.
McCann discovered \cite{McCann} that certain functionals are {\em displacement convex}, that is convex along Wasserstein geodesics. This has led to a variety of applications. Following one of Otto's main discoveries \cite{JKO,O}, it turns out that a large class of PDE's may be viewed as {\em gradient flows}  on the Wasserstein space ${\cal W}_2$. This interpretation, because of the displacement convexity of the functionals, is well suited to establish uniqueness and to study energy dissipation and convergence to equilibrium. A rigorous setting in which to make sense of the Otto calculus has been developed by Ambrosio, Gigli and Savar\'e \cite{AGS,AG} for a suitable class of functionals. Convexity along geodesics in ${\cal W}_2$ also leads to new proofs of various  geometric and functional inequalities \cite{McCann}, \cite[Chapter 9]{Vil}. Finally, we mention that, when the space is not flat, qualitative properties of optimal transport can be quantified in terms of how bounds on the Ricci-Curbastro curvature affect the displacement convexity of certain specific functionals \cite[Part II]{Vil2}.

 The {\em tangent space} of $\mathcal P_2(\R^N)$ at a probability measure $\mu$, denoted by $T_{\mu}\mathcal P_2(\R^N)$ \cite{AGS} may be identified with the closure in  $L^2_{\mu}$ of the span of $\{\nabla\varphi:\varphi\in C^\infty_c\}$, where $C^\infty_c$ is the family of smooth functions with compact support. It is naturally equipped with the scalar product of $L^2_{\mu}$.
 
\section{The Fokker-Planck equation as a gradient flow in Wasserstein space}\label{FPGR}

Let us review the variational formulation of the Fokker-Planck equation as a gradient flow in Wasserstein space \cite{JKO,Vil,TGT}. Consider a physical system with  {\em phase space} $\R^n$ and with {\em Hamiltonian} ${\cal H}: x\mapsto H(x)=E_x$. The thermodynamic states of the system are given by the family ${\cal P}(\R^n)$ of probability distributions $P$ on $\R^n$ that admit the density $\rho$. On ${\cal P}(\R^n)$, we define the {\em internal energy} as the expected value of the Energy {\em observable} in state $P$
\begin{equation}
U(H,\rho)=\E_P\{\mathcal H\}=\int_{\R^n}H(x)\,\rho(x)dx=\langle H,\rho\rangle.
\end{equation}
Let us also introduce the (differential) {\em Gibbs entropy}
\begin{equation}
S(p)=-k\int_{\R^n} \log \rho(x) \rho(x)dx,
\end{equation}
where $k$ is Boltzmann's constant. $S$ is strictly concave on ${\cal P}(\R^n)$.  According to the Gibbsian postulate of classical statistical mechanics, the equilibrium
state of a microscopic system at constant absolute temperature $T$ and with Hamiltonian function $H$ is necessarily given by the Boltzmann distribution law with density
\begin{equation}\label{MB}\bar{\rho}(x)=Z^{-1}\exp\left[-\frac{H(x)}{kT}\right]
\end{equation}
where Z is the partition function.
Let us introduce the {\em Free Energy} functional $F$ defined by
\begin{equation}\label{freeen}F(H,\rho,T):=U(H,\rho)-TS(\rho).
\end{equation}
Since $S$ is strictly concave on ${\cal P}$ and $U(E,\cdot)$ is linear, it follows that $F$ is strictly convex on the state space ${\cal P}(\R^n)$. By Gibbs' variational principle, the Boltzmann distribution $\bar{\rho}$ is a minimum point of the free energy $F$ on ${\cal P}(\R^n)$. Also note the following relation between the {\em divergence} and the free energy:
\begin{eqnarray}\nonumber
\D(\rho\|\bar{\rho})&=&\int_{\R^n}\log\frac{\rho(x)}{\bar{\rho}(x)}\,\rho(x)dx\\&=&-\frac{1}{k}S(\rho)+\log Z+\frac{1}{kT}\int_{\R^n}H(x)\rho(x)dx\nonumber\\&=&\frac{1}{kT}F(H,\rho,T)+ \log Z. \nonumber
\end{eqnarray}
Since $Z$ does not depend on $\rho$, we conclude that Gibbs' principle is a trivial consequence of the fact that $\bar{\rho}$ minimizes $\D(\rho\|\bar{\rho})$ on ${\cal D}(\R^n)$.

Consider now an absolutely continuous curve $\mu_t: [t_0,t_1]\rightarrow \mathcal W_2$. Then \cite[Chapter 8]{AGS}, there exists a ``velocity field" $v_t\in L^2_{\mu_t}$ such that the following continuity equation holds on $(0,T)$
$$\frac{d}{dt}\mu_t+\nabla\cdot(v_t\mu_t)=0.$$
Suppose $d\mu_t=\rho_tdx$, so that the continuity equation
\begin{equation}\label{continuityeq}
\frac{\partial \rho}{\partial t}+\nabla\cdot(v\rho)=0
\end{equation}
holds. We want to study the free energy functional $F(H,\rho_t,T)$ or, equivalently, $\D(\rho_t\|\bar{\rho})$, along the flow $\{\rho_t; t_0\le t\le t_1\}$. Using (\ref{continuityeq}), we get
\begin{eqnarray}\label{FED}
\frac{d}{dt}\D(\rho_t\|\bar{\rho})=\int_{\R^n}\left[1+\log\rho_t+\frac{1}{kT}H(x)\right]\frac{\partial \rho_t}{\partial t}dx\\=-\int_{\R^n}\left[1+\log\rho_t+\frac{1}{kT}H(x)\right]\nabla\cdot(v\rho_t)dx.\label{Decay}
\end{eqnarray}
Integrating by parts, if the boundary terms at infinity vanish, we get
\begin{eqnarray}\nonumber
\frac{d}{dt}\D(\rho_t\|\bar{\rho})=\int_{\R^n}\nabla\left[\log\rho_t+\frac{1}{kT}H(x)\right]\cdot v \rho_tdx\\=\langle \nabla\log\rho_t+\frac{1}{kT}\nabla H(x), v\rangle_{L^2_{\rho_t}}.\nonumber
\end{eqnarray}
Thus, the Wasserstein gradient of $\D(\rho_t\|\bar{\rho})$ is
\begin{equation}\label{entropygrad}\nabla_{\mathcal W_2}\D(\rho_t\|\bar{\rho})=\nabla\log\rho_t+\frac{1}{kT}\nabla H(x)=\nabla\log\left(\frac{\rho_t}{\bar{\rho}}\right).
\end{equation}
The corresponding gradient flow is
\begin{eqnarray}\nonumber
\frac{\partial\rho_t}{\partial t}=\nabla\cdot\left[\left(\nabla\log\rho_t+\frac{1}{kT}\nabla H(x)\right)\rho_t\right]\\=\nabla\cdot\left[\frac{1}{kT}\nabla H(x)\rho_t\right]+\Delta\rho_t.\label{FPflow}
\end{eqnarray}
But this is precisely the Fokker-Planck equation corresponding to the {\em Langevin} diffusion process
\begin{equation}\label{diffusionprocess}
dX_t=-\frac{1}{kT}\nabla H(X_t)dt+\sqrt{2}dW_t
\end{equation}
where $W$ is a standard $n$-dimensional Wiener process. The process (\ref{diffusionprocess}) has the Boltzmann distribution (\ref{MB}) as invariant density. Recall that \cite[p.220]{AGS} $F(H,\rho_t,T)$ or, equivalently, $\D(\rho_t\|\bar{\rho})$ are {\em displacement convex} and therefore have  a unique minimizer.

Let us finally plug the ``steepest descent" (\ref{FPflow}) into (\ref{Decay}). We get, after integrating by parts, the well-known formula \cite{Gr} similar to (\ref{decrease}):
\begin{eqnarray} \nonumber
\frac{d}{dt}\D(\rho_t\|\bar{\rho})=\int_{\R^n}\left[1+\log\rho_t+\frac{1}{kT}H(x)\right]\frac{\partial \rho_t}{\partial t}dx\\\nonumber=\int_{\R^n}\left[1+\log\rho_t+\frac{1}{kT}H(x)\right]\nabla\cdot\left[\frac{1}{kT}\nabla H(x)\rho_t+\nabla\rho_t\right]dx\\=-\int_{\R^n}\|\nabla\log\left(\frac{\rho_t}{\bar{\rho}}\right)\|^2\rho_t dx\label{FEdecay1}\\=-\int_{\R^n}\|\nabla_{\mathcal W_2}\D(\rho_t\|\bar{\rho})\|^2\rho_t dx.\label{FEdecay2}
\end{eqnarray}
The  integral in (\ref{FEdecay1}) is sometimes called the {\em relative Fisher information} of $\rho_t$ with respect to $\bar{\rho}$ \cite[p.278]{Vil}. Notice that (\ref{FEdecay2}) is completely analogous to (\ref{decrease}).

\section{The Fokker-Planck as an optimal controlled evolution on Wasserstein space}\label{FPC}

Let 
$\cal P$ be the family of $\R^n$-valued, finite-energy diffusions on $C[0,1]$ equivalent to $\sigma^2 \mathcal W$, where $\mathcal W$ is stationary Wiener measure, see \cite[Subsection3.1]{CGP3}. For $P\in\cal P$, we denote by $\beta^P_+$ and $\beta^P_-$ the forward and the backward drifts, respectively. We also introduce the current and osmotic drifts by
  \[
  v^P=\frac{\beta^P_++\beta^P_-}{2}, \quad u^P=\frac{\beta^P_+-\beta^P_-}{2}.
  \]
  Then formula $(38)$ in  \cite{CGP3}, adapted to the case $\sigma^2\neq 1$, reads
\begin{equation}\label{dec}\D(Q\|P)=\frac{1}{2}\D(q_0\|p_0)+\frac{1}{2}\D(q_1\|p_1)+\frac{1}{2\sigma^2}E_Q\left[\int_{0}^{1}
\left(\|v^Q-v^P\|^2+
\|u^Q-u^P\|^2\right)dt\right]
\end{equation}
Recall that for any Markovian diffusion $P$ with forward drift field $b^P_+(t,x)$ and diffusion coefficient $\sigma^2$, the backward drift field is given by 
\begin{equation}\label{bdrift}b^P_-(t,x)=b^P_+(t,x)-\sigma^2\nabla\log\rho_t,
\end{equation}
where $\rho_t(x)$ is the density of $X_t$ under $P$. The {\em current} and {\em osmotic} drift fields are then given by
\begin{eqnarray}\label{current}
v^P(t,x)&=&\frac{b^P_+(t,x)+b^P_-(t,x)}{2}= b^P_+(t,x)-\frac{\sigma^2}{2}\nabla\log\rho_t,\\ u^P(t,x)&=&\frac{b^P_+(t,x)-b^P_-(t,x)}{2}= \frac{\sigma^2}{2}\nabla\log\rho_t.\label{osmotic}
\end{eqnarray}
It is immediate that the Fokker-Planck corresponding to $P$
\[
\frac{\partial\rho_t}{\partial t}+\nabla\cdot\left[b^P_+\rho_t\right]=\frac{\sigma^2}{2}\Delta\rho_t.
\]
can be written as a continuity equation:
\[
\frac{\partial\rho_t}{\partial t}+\nabla\cdot\left[v^P\rho_t\right]=0.
\]
Let $\bar{P}$ be the measure associated with $(\ref{diffusionprocess})$ starting with the Boltzmann-Gibbs marginal $(\ref{MB})$ at time $t=0$. We observe that 
\begin{equation}\label{equilv}v^{\bar{P}}=-\frac{1}{kT}\nabla H(x)-\nabla\log\bar{\rho}(x)=0.\end{equation}

We now follow \cite[Sections 4 and 5]{CGP3}. Let ${\cal P}(\rho_0)$ be the family of finite energy,  Markovian diffusion measures on $C[0,1]$ having $\rho_0$ as initial density and diffusion coefficient $\sigma^2=2$. We consider the following (trivial) Schr\"odinger Bridge Problem: 
\begin{equation}\label{problem}\mathcal P:\;\;\text {Minimize} \;\D(P\|\bar{P})\; \text{over}\; {\cal P}(\rho_0).
\end{equation}
By \cite[(19a)]{CGP3}
\[
\D(Q\|P)=\D(q_0\|p_0)+\E_Q\left[\int_0^1\frac{1}{2\sigma^2}\|\beta^Q_+-\beta^P_+\|^2dt\right].
\]
Hence, the solution is the measure in ${\cal P}(\rho_0)$ that has forward Ito differential $(\ref{diffusionprocess})$. 
By (\ref{dec}), (\ref{osmotic}) and (\ref{equilv}), observing that $\frac{1}{2}\D(\rho_0\|\bar{\rho})$ is constant over ${\cal P}(\rho_0)$, the problem is equivalent to minimizing over ${\cal P}(\rho_0)$
\[
\frac{1}{2}\D(\rho_1\|\bar{\rho})+\frac{1}{4}\E_P\left[\int_0^1\left(\|v^P\|^2+\|\nabla\log\frac{\rho_t}{\bar{\rho}}\|^2\right)dt\right].
\]
In the spirit of \cite[Section 5]{CGP3}, after multiplying by $2$ the criterion, we get  the following fluid-dynamic formulation of Problem $\mathcal P (\ref{problem})$:
\begin{subequations}\label{eq:BB}
\begin{eqnarray}\label{CON1}&&\inf_{(\rho,v)}J(\rho,v)=\inf_{(\rho,v)}\frac{1}{2}\int_{0}^{1}\int_{\R^n}\left(\|v(t,x)\|^2+\|\nabla\log\frac{\rho_t}{\bar{\rho}}\|^2\right)\rho_t dxdt+ \D(\rho_1\|\bar{\rho}),\\&&\frac{\partial \rho}{\partial t}+\nabla\cdot(v\rho)=0,\label{BB2}\\&& \rho(x,0)=\rho_0(x).\label{boundary}
\end{eqnarray}\end{subequations}
Here, the flow $\{\rho_t; 0\le t\le 1\}$ varies over continuous maps from $[0,1]$ to $\mathcal P_2(\R^n)$ and $v$ over smooth fields. {\em Notice that (\ref{eq:BB}) represents a natural extension to the infinite dimensional ${\cal W}_2$ of the problem considered in Section \ref{GD} since by (\ref{entropygrad}) $\nabla_{\mathcal W_2}\D(\rho_t\|\bar{\rho})=\nabla\log\left(\frac{\rho_t}{\bar{\rho}}\right)$!} 
\begin{thm}The gradient flow for the functional $\D(\rho_t\|\bar{\rho})$ on the Wasserstein space ${\cal W}_2$
\begin{eqnarray}&&\frac{\partial \rho^*}{\partial t}+\nabla\cdot(v^*\rho^*)=0,\label{BB2}\\&& \rho^*(x,0)=\rho_0(x),\\&&v^*=-\nabla\left(\log\frac{\rho^*_t}{\bar{\rho}}\right).
\end{eqnarray}
solves Problem (\ref{eq:BB}). 
\end{thm}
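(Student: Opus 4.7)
The plan is to mirror, in the infinite-dimensional Wasserstein setting, the Lagrange-multiplier argument used in the proof of Theorem \ref{thm1}. I first need a functional that plays the role of $\Lambda^S$ and vanishes automatically whenever the continuity equation (\ref{BB2}) with initial condition $\rho_0$ is satisfied. The natural candidate, by analogy with the $\R^n$ case and by (\ref{entropygrad}), is
\begin{equation*}
\Lambda[\rho,v] \;=\; \int_0^1\!\!\int_{\R^n} v(t,x)\cdot \nabla\log\frac{\rho_t}{\bar\rho}\;\rho_t\,dx\,dt \;-\;\D(\rho_1\|\bar\rho) \;+\;\D(\rho_0\|\bar\rho).
\end{equation*}
A computation identical in form to (\ref{FED})--(\ref{Decay}), together with integration by parts in $x$ (assuming boundary terms at infinity vanish), shows that along any $(\rho,v)$ obeying (\ref{BB2}) one has $\frac{d}{dt}\D(\rho_t\|\bar\rho)=-\int v\cdot\nabla\log(\rho_t/\bar\rho)\rho_t dx$, so $\Lambda\equiv 0$ on the constraint set. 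Thus $\Lambda$ is a bona fide Lagrange functional for the continuity equation constraint with fixed initial marginal.

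Next I would study the unconstrained minimization of $J+\Lambda$ over pairs $(\rho,v)$ in a suitable admissible class. Adding $\Lambda$ to $J$ and completing the square pointwise in $v$ gives
\begin{equation*}
(J+\Lambda)[\rho,v] \;=\; \frac{1}{2}\int_0^1\!\!\int_{\R^n}\Bigl\|v(t,x)+\nabla\log\frac{\rho_t}{\bar\rho}\Bigr\|^2\rho_t\,dx\,dt \;+\;\D(\rho_0\|\bar\rho),
\end{equation*}
because the $+\D(\rho_1\|\bar\rho)$ terminal term in $J$ cancels with the $-\D(\rho_1\|\bar\rho)$ term in $\Lambda$. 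This is the analogue of the trivialization $(J+\Lambda^f)(x,u)\equiv f(x_0)$ obtained in the proof of Theorem \ref{thm1}: for any fixed flow $\rho$, pointwise minimization in $v$ yields
\begin{equation*}
v^*_\rho(t,x)=-\nabla\log\frac{\rho_t}{\bar\rho}=-\nabla_{\mathcal W_2}\D(\rho_t\|\bar\rho),
\end{equation*}
and with this choice $(J+\Lambda)$ reduces to the constant $\D(\rho_0\|\bar\rho)$, independent of $\rho$. Hence no minimization over $\rho$ is needed.

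Finally I would check that substituting $v^*=-\nabla\log(\rho^*_t/\bar\rho)$ into the continuity equation (\ref{BB2}) yields precisely the Wasserstein gradient-flow PDE (\ref{FPflow}), so the pair $(\rho^*,v^*)$ in the statement of the theorem is admissible: it satisfies the constraint with the prescribed initial marginal $\rho_0$. On this pair $\Lambda$ vanishes, so $J(\rho^*,v^*)=\D(\rho_0\|\bar\rho)$. Since for every admissible competitor $(\rho,v)$ the identity $\Lambda[\rho,v]=0$ together with the completed-square formula above gives $J(\rho,v)\geq\D(\rho_0\|\bar\rho)$, the pair $(\rho^*,v^*)$ is a minimizer and the stated gradient flow solves Problem (\ref{eq:BB}).

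I expect the main obstacle to be technical rather than conceptual: making rigorous the admissible class on which the integration-by-parts steps, the existence and uniqueness of the solution of (\ref{FPflow}) starting from $\rho_0$, and the interpretation of the velocity $v^*$ as an element of $T_{\rho_t^*}\mathcal P_2(\R^n)$ all hold simultaneously. In particular, one must assume enough regularity/decay on $\rho_0$, $\bar\rho$ and $v$ that the boundary contributions at infinity in the computation of $\frac{d}{dt}\D(\rho_t\|\bar\rho)$ genuinely vanish; under the standing Gibbs-Boltzmann hypotheses on $H$ (guaranteeing displacement convexity and well-posedness of the gradient flow, as recalled after (\ref{FPflow})) this is standard but should be explicitly invoked.
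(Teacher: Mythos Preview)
Your proposal is correct and follows essentially the same Lagrange-functional route as the paper: you introduce the same $\Lambda$, verify it vanishes on the constraint set, minimize $J+\Lambda$ pointwise in $v$, and observe the result is the constant $\D(\rho_0\|\bar\rho)$; the only cosmetic difference is that you complete the square explicitly whereas the paper plugs the minimizer back in. One small slip: the sign in your intermediate claim ``$\tfrac{d}{dt}\D(\rho_t\|\bar\rho)=-\int v\cdot\nabla\log(\rho_t/\bar\rho)\,\rho_t\,dx$'' should be $+$, not $-$ (cf.\ the paper's computation right after (\ref{Decay})); this does not affect your $\Lambda$ or your completed-square identity, which are both correct as written.
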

\begin{proof}
By (\ref{Decay}), and assuming that $\rho_t$ vanishes at infinity,
\begin{eqnarray}\nonumber&&\int_0^1\frac{d}{dt}\D(\rho_t\|\bar{\rho})dt-\D(\rho_1\|\bar{\rho})+\D(\rho_0\|\bar{\rho})\\&&=-\int_0^1\int_{\R^n}\left[1+\log\rho_t+\frac{1}{kT}H(x)\right]\nabla\cdot(v\rho_t)dxdt-\D(\rho_1\|\bar{\rho})+\D(\rho_0\|\bar{\rho})\nonumber\\&&\int_0^1\int_{\R^n}\nabla\left(\log\frac{\rho_t}{\bar{\rho}}\right)\cdot v\rho_t dxdt -\D(\rho_1\|\bar{\rho})+\D(\rho_0\|\bar{\rho})=\Lambda(\rho,v)\label{LF}
\end{eqnarray}
is a {\em Lagrange Functional} for problem (\ref{eq:BB}) since it is identically zero when the constraint (\ref{BB2}) is satisfied. We now consider  the {\em uncontrained} minimization of the criterion $J(\rho,v)$ of Problem (\ref{eq:BB}) plus  our Lagrange functional $\Lambda(\rho,v)$. Pointwise minimization of the integrand with respect to $v$ {\em for a fixed flow} $\{\rho_t; 0\le t\le 1\}$ yields the optimality condition 
\begin{equation}\label{optv}
v^*(\rho_t)=-\nabla\left(\log\frac{\rho_t}{\bar{\rho}}\right).
\end{equation}
Plugging this expression back into the integrand, we get 
\begin{eqnarray}\nonumber
J(\rho,v^*(\rho))=&&\int_{0}^{1}\int_{\R^n}\left(\frac{1}{2}\|\nabla\log\frac{\rho_t}{\bar{\rho}}\|^2+\frac{1}{2}\|\nabla\log\frac{\rho_t}{\bar{\rho}}\|^2-\|\nabla\log\frac{\rho_t}{\bar{\rho}}\|^2\right)\rho_t dxdt\\&&+ \D(\rho_1,\bar{\rho})-\D(\rho_1\|\bar{\rho})+\D(\rho_0\|\bar{\rho})=\D(\rho_0\|\bar{\rho})\nonumber
\end{eqnarray}
which is constant over ${\cal P}(\rho_0)$. Thus, minimization with respect to $\rho$ is not necessary. The evolution $\{\rho^*_t; 0\le t\le 1\}$ satisfying (\ref{BB2}) with $v=v^*$ and (\ref{boundary}) together with $v^*$ solves Problem (\ref{eq:BB}). 
\end{proof}
\noindent
In view of (\ref{FEdecay2}), $\D(\rho,\bar{\rho})$ is monotonically decreasing along $\{\rho^*_t; 0\le t\le 1\}$.
\begin{remark}
 {\em Notice that for the optimal evolution
 \[
 \frac{1}{2}\|v-\bar{v}\|^2=\frac{1}{2}\|\nabla\log\frac{\rho_t}{\bar{\rho}}\|^2=\frac{1}{2}\|u-\bar{u}\|^2,
 \]
 a sort of ``virial theorem".}
\end{remark}

\section{Relative entropy as a functional on Wasserstein product spaces}\label{REWP}

In the Schr\"{o}dinger bridge problem (SBP) \cite{F2}, one seeks the random evolution (a probability measure on path-space) which is closest in the relative entropy sense to a prior Markov diffusion evolution and has certain prescribed initial and final marginals $\mu$ and $\nu$. Thanks to Sanov's Theorem \cite{SANOV,DZ}, the solution to such a maximum entropy problem provides the {\em most likely} random evolution between the given marginals. As already observed by Schr\"{o}dinger \cite{S1,S2}, the problem may be reduced to a {\em static} problem which, except for the cost, resembles the Kantorovich relaxed formulation of the optimal mass transport problem (OMT). Considering that since \cite{BB} (OMT) also has a dynamic formulation, we have two problems which admit equivalent static and dynamic versions \cite{leo2}. Moreover, in both cases, the solution entails a flow of one-time marginals joining $\mu$ and $\nu$. The OMT yields a {\em displacement interpolation flow} whereas the SBP provides an {\em entropic interpolation flow}. 

Through the work of Mikami, Mikami-Thieullen, and Leonard \cite{Mik, mt, MT,leo,leo2}, we know that the OMT may be viewed as a ``zero-noise limit" of SBP when the prior is a sort of uniform measure on path space with vanishing variance. This connection has been extended to more general prior evolutions in \cite{CGP3,CGP4}. Moreover, we know that, thanks to a very useful intuition by Otto \cite{O}, the displacement interpolation flow $\{\mu_t; 0\le t\le 1\}$ may be viewed as a constant-speed geodesic joining $\mu$ and $\nu$ in Wasserstein space \cite{Vil}.  
What can be said from this geometric viewpoint of the entropic flow? It cannot be a geodesic, but can it be characterized as a curve that minimizes a suitable action? In \cite{CGP3}, we showed that this is indeed the case resorting to a time-symmetric fluid dynamic formulation of  SBP. The action features an additional term which is a {\em Fisher information functional}. Moreover, this characterization of the Schr\"{o}dinger bridge answers in one place the question posed by Carlen \cite[pp. 130-131]{Carlen}.

It has been observed since the early nineties that SBP can be turned, thanks to {\em Girsanov's theorem},  into a stochastic control problem with atypical boundary constraints, see \cite{DP,Bl,DPP,PW,FHS}. The latter has a fluid dynamic counterpart. It is therefore interesting to compare the flow associated with the uncontrolled evolution (prior) to the optimal one. In particular, it is interesting to study the evolution of the relative entropy in the {\em product} Wasserstein space  {\em over a finite time interval}. Thus, this study differs from previous work in OMT theory concerning relative entropy from an equilibrium distribution (density).
Consider now two absolutely continuous curves $\mu_t: [t_0,t_1]\rightarrow \mathcal W_2$ and $\tilde{\mu}_t: [t_0,t_1]\rightarrow \mathcal W_2$ and their velocity fields $v_t\in L^2_{\mu_t}$ and $\tilde{v}_t\in L^2_{\tilde{\mu}_t}$. Then, on $(0,T)$
\begin{eqnarray}\label{conteq1}
\frac{d}{dt}\mu_t+\nabla\cdot(v_t\mu_t)=0,\\ \frac{d}{dt}\tilde{\mu}_t+\nabla\cdot(\tilde{v}_t\tilde{\mu}_t)=0.\label{conteq2}
\end{eqnarray}
Let us suppose that $d\mu_t=\rho_t(x)dx$ and $d\tilde{\mu}_t=\tilde{\rho}_t(x)dx$, for all $t\in [t_0,t_1]$. Then (\ref{conteq1})-(\ref{conteq2}) become
\begin{eqnarray}\label{Conteq1}
\frac{\partial \rho}{\partial t}+\nabla\cdot(v\rho)=0,\\ \frac{\partial\tilde{\rho}}{\partial t}+\nabla\cdot(\tilde{v}\tilde{\rho})=0,\label{Conteq2}
\end{eqnarray}
where the fields $v$ and $\tilde{v}$ satisfy
$$\int_{\R^n}\|v(t,x)\|^2\rho_t(x)dx<\infty,\quad \int_{\R^n}\|\tilde{v}(t,x)\|^2\tilde{\rho}_t(x)dx<\infty.
$$
The differentiability of the Wasserstein distance $W_2(\tilde{\rho}_t,\rho_t)$ has been studied \cite[Theorem 23.9]{Vil2}. Consider instead  the relative entropy functional
 on $\mathcal W_2\times \mathcal W_2$ 
 \begin{align}\nonumber \D(\tilde{\rho}_t\|\rho_t)&=\int_{\R^n}h(\tilde{\rho}_t,\rho_t)dx=\int_{\R^n}\log\left(\frac{\tilde{\rho}_t}{\rho_t}\right)\tilde{\rho}_t dx,\\
 \mbox{taking } & h(\tilde{\rho},\rho)=\log\left(\frac{\tilde{\rho}}{\rho}\right)\tilde{\rho}.\nonumber
 \end{align}
Relative entropy functionals $\D(\cdot\|\gamma)$, where $\gamma$ is a fixed probability measure (density), have been studied as geodesically convex functionals on $P_2(\R^n)$, see \cite[Section 9.4]{AGS}.  Our study of the evolution of $\D(\tilde{\rho}_t\|\rho_t)$ is motivated by problems on a finite time interval such as the Schr\"{o}dinger bridge problem and stochastic control problems \cite[Section VI]{CGP6} where it is important to evaluate relative entropy on {\em two} flows of marginals.

We  get
\begin{eqnarray}\nonumber
\frac{d}{dt}\D(\tilde{\rho}_t\|\rho_t)=\int_{\R^n}\left[\frac{\partial h}{\partial \tilde{\rho}} \frac{\partial\tilde{\rho}}{\partial t}+\frac{\partial h}{\partial \rho} \frac{\partial \rho}{\partial t}\right]dx\\=\int_{\R^n}\left[\left(1+\log\tilde{\rho}_t-\log\rho_t\right)\left(-\nabla\cdot(\tilde{v}\tilde{\rho}_t\right)\right.\nonumber\\\left.+\left(-\frac{\tilde{\rho}_t}{\rho_t}\right)\left(-\nabla\cdot(v\rho_t\right)\right]dx\label{entropyevolution}
\end{eqnarray}
After an integration by parts, assuming that the boundary terms at infinity vanish, we get
\begin{eqnarray}\nonumber
\frac{d}{dt}\D(\tilde{\rho}_t\|\rho_t)&=&\int_{\R^n}\left[\nabla \log\left(\frac{\tilde{\rho}_t}{\rho_t}\right)\cdot \tilde{v}\tilde{\rho}_t-\nabla \frac{\tilde{\rho}_t}{\rho_t}\cdot v\rho_t\right]dx\\&=&\int_{\R^n}\left[\left(\begin{matrix}\nabla \log\left(\frac{\tilde{\rho}_t}{\rho_t}\right)\\-\nabla \frac{\tilde{\rho}_t}{\rho_t}\end{matrix}\right)\cdot \left(\begin{matrix}\tilde{v}\tilde{\rho}_t\\v\rho_t\end{matrix}\right)\right]dx.\label{1}
\end{eqnarray}
Notice that the last expression looks like
$$\left\langle \left(\begin{matrix}\nabla \log\left(\frac{\tilde{\rho}_t}{\rho_t}\right)\\-\nabla \frac{\tilde{\rho}_t}{\rho_t}\end{matrix}\right), \left(\begin{matrix}\tilde{v}\\v\end{matrix}\right)\right\rangle_{L^2_{\tilde{\rho}_t}\times L^2_{\rho_t}}.
$$
Thus, we identify the gradient of the functional $\D(\tilde{\rho}\|\rho)$ on $\mathcal W_2\times \mathcal W_2$ as
\begin{equation}\label{gradWasser}
\left(\begin{matrix}\nabla^1_{\mathcal W_2} \D(\tilde{\rho}\|\rho)\\\nabla^2_{\mathcal W_2} \D(\tilde{\rho}\|\rho)\end{matrix}\right)=\left(\begin{matrix}\nabla \log\left(\frac{\tilde{\rho}}{\rho}\right)\\-\nabla \frac{\tilde{\rho}}{\rho_t}\end{matrix}\right).
\end{equation}
Let us now compute the gradient flow on  $\mathcal W_2\times \mathcal W_2$ corresponding to gradient (\ref{gradWasser}). We get
\begin{equation}\label{steepest}\frac{\partial}{\partial t}\left(\begin{matrix}\tilde{\rho}_t\\\rho_t\end{matrix}\right)-\nabla\cdot\left(\begin{matrix}\tilde{\rho}_t\nabla\log\left(\frac{\tilde{\rho}_t}{\rho_t}\right)\\-\rho_t\nabla\left(\frac{\tilde{\rho}_t}{\rho_t}\right)\end{matrix}\right)=0.
\end{equation}
Since 
$$J_1=\tilde{\rho}_t\nabla\log\left(\frac{\tilde{\rho}_t}{\rho_t}\right)=\rho_t\nabla\left(\frac{\tilde{\rho}_t}{\rho_t}\right)=-J_2,
$$
we observe the remarkable property that in the ``steepest descent" (\ref{steepest}) on the product Wasserstein space the ``fluxes" are {\em opposite} and, therefore, $\frac{\partial\tilde{\rho}}{\partial t}=-\frac{\partial\rho}{\partial t}$. If we plug the steepest descent (\ref{steepest}) into (\ref{entropyevolution}), we get what appears to be a new formula
\begin{eqnarray}\nonumber\frac{d}{dt}\D(\tilde{\rho}_t\|\rho_t)=\int_{\R^n}\left[\left(1+\log\tilde{\rho}_t-\log\rho_t+\frac{\tilde{\rho}_t}{\rho_t}\right)\frac{\partial\tilde{\rho}}{\partial t}\right]dx\\\nonumber=-\int_{\R^n}\left[\|\nabla\log\left(\frac{\tilde{\rho}_t}{\rho_t}\right)\|^2 \tilde{\rho}_t+\|\nabla\left(\frac{\tilde{\rho}_t}{\rho_t}\right)\|^2\rho_t\right]dx\\-\int_{\R^n}\left[\|\nabla^1_{\mathcal W_2} \D(\tilde{\rho}_t\|\rho)\|^2 \tilde{\rho}_t+\|\nabla^2_{\mathcal W_2} \D(\tilde{\rho}_t\|\rho)\|^2\rho_t\right]dx\label{REFF0}\\=-\int_{\R^n}\left[\left(1+\frac{\tilde{\rho}_t}{\rho_t}\right)\|\nabla\log\left(\frac{\tilde{\rho}_t}{\rho_t}\right)\|^2 \tilde{\rho}_t\right]dx.\label{REFF}
\end{eqnarray}
(\ref{REFF0}) should be compared to (\ref{FEdecay2}).

Let us return to equation (\ref{1}). By multiplying and dividing by $\tilde{\rho}_t$ in the last term of the middle expression, we get
\begin{equation}\label{PT2006}
\frac{d}{dt}\D(\tilde{\rho}_t\|\rho_t)=\int_{\R^n}\left[\nabla \log\left(\frac{\tilde{\rho}_t}{\rho_t}\right)\cdot \left(\tilde{v}-v\right)\right]\tilde{\rho}_t dx\end{equation} 
which is precisely the expression obtained in \cite[Theorem III.1]{PT}.

\section{Gradient descent on Wasserstein Product Spaces as optimal controlled evolution}\label{GDWPS}

In this section, we seek to describe the steepest descent on Wasserstein product spaces of the previous section as an optimal evolution. The main motivation comes from Decision Making for Microrobotic  Swarms that  can communicate their present distribution to the other swarm \cite{ZMAF,ZMAF2}. Suppose that the goal is to have the two swarms tend, on a given time interval, to a similar distribution in the relative entropy sense: A sort of ``Entropic Barycenter". We then seek to minimize a suitable functional over $\D(\tilde{\rho}_0)\times\D(\rho_0)$, namely pairs of finite energy diffusions with the same diffusion coefficient $\sigma^2$ ($\sigma^2=1$ for simplicity), the first starting at $t=0$ at $\tilde{\rho}_0$, the second at $\rho_0$. 
We consider the problem of minimum optimal steering of two flows of  probability distributions in Wasserstein space on a given time interval.  More explicitly, we are dealing with the following fluid dynamic optimization problem:
\begin{subequations}\label{eq:BBB}
\begin{align}\label{PROD}&\inf_{(\tilde{\rho},\rho,\tilde{v},v)}\;\frac{1}{2}\int_{0}^{1}\int_{\R^n}\left[\|\tilde{v}\|^2\tilde{\rho}+\|v\|^2\rho+\|\nabla \log\left(\frac{\tilde{\rho}}{\rho}\right)\|^2\tilde{\rho}+\|\nabla \left(\frac{\tilde{\rho}}{\rho}\right)\|^2\rho\right] dxdt+ \D(\tilde{\rho}_1\|\rho_1),\\
&\frac{\partial \tilde{\rho}}{\partial t}+\nabla\cdot(\tilde{v}\tilde{\rho})=0,\label{BBB2a}\\&\frac{\partial \rho}{\partial t}+\nabla\cdot(v\rho)=0,\label{BBB2b}\\&\tilde{\rho}(x,0)=\tilde{\rho}_0(x),\label{BBB3a}\\& \rho(x,0)=\rho_0(x).\label{BBB3b}
\end{align}\end{subequations}
\begin{remark}
In view of (\ref{gradWasser}),
and in light of the fact that $\D(\tilde{\rho},\rho)$ is a ``functional of two variables,''
Problem (\ref{eq:BBB}) represents a natural extension to  ${\cal W}_2\times {\cal W}_2$ of  Problem (\ref{eq:BB}) which, in turn, was an infinite-dimensional extension of the problem in Section \ref{GD}.
\end{remark}

\begin{thm}The gradient flow for the functional $\D(\tilde{\rho}_t\|\rho_t)$ on the Wasserstein product space ${\cal W}_2\times {\cal W}_2$
\begin{eqnarray}&&\frac{\partial \tilde{\rho}^*}{\partial t}+\nabla\cdot(\tilde{v}^*\tilde{\rho}^*)=0\\&&\frac{\partial \rho^*}{\partial t}+\nabla\cdot(v^*\rho^*)=0,\\&&\tilde{\rho}^*(x,0)=\tilde{\rho}_0(x),\\&& \rho^*(x,0)=\rho_0(x),\\&&\tilde{v}^*=\nabla \log\left(\frac{\tilde{\rho}}{\rho}\right)\\&&v^*=-\nabla\left(\frac{\tilde{\rho}^*_t}{\rho_t}\right).
\end{eqnarray}
solves Problem (\ref{eq:BBB}). 
\end{thm}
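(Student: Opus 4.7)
The plan is to imitate, one dimension higher, the Lagrange-multiplier argument used for Theorem~\ref{thm1} and for the Fokker--Planck theorem in Section~\ref{FPC}. First I would introduce a Lagrange functional that vanishes identically whenever the two continuity constraints \eqref{BBB2a}--\eqref{BBB2b} are satisfied. The natural candidate, read off directly from the computation that produced \eqref{1}, is
\begin{equation*}
\Lambda(\tilde\rho,\rho,\tilde v,v)
=\int_0^{1}\!\!\int_{\R^n}\!\left[\nabla\log\!\left(\tfrac{\tilde\rho_t}{\rho_t}\right)\cdot\tilde v\,\tilde\rho_t
-\nabla\!\left(\tfrac{\tilde\rho_t}{\rho_t}\right)\cdot v\,\rho_t\right]dx\,dt
-\D(\tilde\rho_1\|\rho_1)+\D(\tilde\rho_0\|\rho_0).
\end{equation*}
Indeed, running \eqref{entropyevolution}--\eqref{1} backward, whenever \eqref{BBB2a}--\eqref{BBB2b} hold the double integral equals $\int_0^1\frac{d}{dt}\D(\tilde\rho_t\|\rho_t)\,dt=\D(\tilde\rho_1\|\rho_1)-\D(\tilde\rho_0\|\rho_0)$, so $\Lambda\equiv 0$ on admissible quadruples. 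This is the product-space analogue of the Lagrange functional \eqref{LF}.

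The second step is to carry out the unconstrained minimization of $J+\Lambda$ over $\cal X\times\cal U$ (paths and controls) in two stages. Because the integrand of $J+\Lambda$ decouples into a quadratic form in $\tilde v$ plus a quadratic form in $v$, I fix the flows $(\tilde\rho,\rho)$ and perform pointwise minimization over the velocity fields. Completing the square gives
\begin{equation*}
\tilde v^{*}=-\nabla\log\!\left(\tfrac{\tilde\rho_t}{\rho_t}\right),\qquad
v^{*}=\nabla\!\left(\tfrac{\tilde\rho_t}{\rho_t}\right),
\end{equation*}
and substituting back eliminates all four velocity-dependent terms in the Lagrangian simultaneously (this is the key computational miracle, entirely analogous to what happens in the earlier two theorems). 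The remaining contribution is $(J+\Lambda)(\tilde\rho,\rho,\tilde v^{*},v^{*})=\D(\tilde\rho_0\|\rho_0)$, a constant on $\cal X$.

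The third step is to close the loop: since this constant is attained independently of the flow, and since the pair $(\tilde\rho^{*},\rho^{*})$ defined by the gradient-flow system in the statement together with $(\tilde v^{*},v^{*})$ automatically satisfies the continuity equations \eqref{BBB2a}--\eqref{BBB2b} and the initial conditions \eqref{BBB3a}--\eqref{BBB3b} (this is exactly what \eqref{steepest} says, fed into the continuity equations), the gradient flow realizes the minimum $J(\tilde\rho^{*},\rho^{*},\tilde v^{*},v^{*})=\D(\tilde\rho_0\|\rho_0)$, and hence solves \eqref{eq:BBB}. The main obstacle I anticipate is not algebraic but regularity-theoretic: justifying the integrations by parts at infinity needed to obtain \eqref{1}, guaranteeing that the admissible class is rich enough to contain the gradient flow, and ensuring that $\nabla(\tilde\rho_t/\rho_t)$ belongs to $L^2_{\rho_t}$ (so the criterion \eqref{PROD} is finite along the candidate optimizer). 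As is standard in the Jordan--Kinderlehrer--Otto circle of ideas, these points would be handled by assuming the densities are smooth, strictly positive and sufficiently decaying at infinity, the same tacit regularity hypotheses used in the preceding theorem.
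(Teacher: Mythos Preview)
Your proposal is correct and follows essentially the same route as the paper's own proof: the same Lagrange functional $\Lambda$ built from \eqref{1}, the same pointwise minimization in $(\tilde v,v)$ for fixed flows yielding $\tilde v^{*}=-\nabla\log(\tilde\rho/\rho)$ and $v^{*}=\nabla(\tilde\rho/\rho)$, and the same collapse of $J+\Lambda$ to the constant $\D(\tilde\rho_0\|\rho_0)$. Your closing remarks on regularity (vanishing boundary terms, smoothness and positivity of the densities) are exactly the tacit assumptions the paper invokes as well.
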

\begin{proof}
We want to show that the optimal evolution for this problem, with optimal {\em feedback} controls (velocities)
\[
\left(\begin{matrix}\tilde{v}^*\\v^*\end{matrix}\right)=\left(\begin{matrix}\nabla \log\left(\frac{\tilde{\rho}}{\rho}\right)\\-\nabla \frac{\tilde{\rho}}{\rho_t}\end{matrix}\right),
\]
is the gradient descent (\ref{steepest}).  Assuming that both $\tilde{\rho}_t$ and $\rho_t$ vanish at infinity, and using (\ref{BBB2a})-(\ref{BBB2b}), we get
\begin{eqnarray}\nonumber &&0=\int_0^1\frac{d}{dt}\D(\tilde{\rho}_t\|\rho_t)dt-\D(\tilde{\rho}_1\|\rho_1)+\D(\tilde{\rho}_0\|\rho_0)\\&&=-\int_0^1\int_{\R^n}\left\{\left[1+\log\frac{\tilde{\rho}_t}{\rho_t}\right]\nabla\cdot (\tilde{v}\tilde{\rho}_t)-\frac{\tilde{\rho}_t}{\rho_t}\left(\nabla\cdot(v\rho_t)\right)\right\} dxdt-\D(\tilde{\rho}_1\|\rho_1)+\D(\tilde{\rho}_0\|\rho_0)\nonumber\\&&=\int_0^1\int_{\R^n}\left[\nabla\left(\log\frac{\tilde{\rho}_t}{\rho_t}\right)\cdot\tilde{v}\tilde{\rho}_t-\nabla\left(\frac{\tilde{\rho}_t}{\rho_t}\right)\cdot v\rho_t\right]dxdt -\D(\tilde{\rho}_1\|\rho_1)+\D(\tilde{\rho}_0\|\rho_0)\nonumber
\end{eqnarray}
This calculation shows that
\[\Lambda(\tilde{\rho},\rho,\tilde{v},v)=\int_0^1\int_{\R^n}\left[\nabla\left(\log\frac{\tilde{\rho}_t}{\rho_t}\right)\cdot\tilde{v}\tilde{\rho}_t-\nabla\left(\frac{\tilde{\rho}_t}{\rho_t}\right)\cdot v\rho_t\right]dxdt -\D(\tilde{\rho}_1\|\rho_1)+|D(\tilde{\rho}_0\|\rho_0)\]
is a {\em Lagrange Functional} for problem (\ref{eq:BBB}) since it is identically zero when the constraints (\ref{BBB2a})-(\ref{BBB2b}) are satisfied. We now consider the {\em uncontrained} minimization of $J+\Lambda$ where  $J(\tilde{\rho},\rho,\tilde{v},v)$ is the criterion of Problem (\ref{eq:BB}).  Pointwise minimization of the integrand with respect to $\tilde{v}$ and $v$ {\em for  fixed flows} $\{\tilde{\rho}_t; 0\le t\le 1\}$ and  $\{\rho_t; 0\le t\le 1\}$  yields the optimality conditions (see (\ref{gradWasser}))
\begin{equation}\label{optimal v's}
\left(\begin{matrix}\tilde{v}^*\\v^*\end{matrix}\right)=\left(\begin{matrix}-\nabla \log\left(\frac{\tilde{\rho}}{\rho}\right)\\\nabla \frac{\tilde{\rho}}{\rho_t}\end{matrix}\right)=
\left(\begin{matrix}-\nabla^1_{\mathcal W_2} \D(\tilde{\rho}\|\rho)\\-\nabla^2_{\mathcal W_2} \D(\tilde{\rho}\|\rho)\end{matrix}\right).
\end{equation}
Plugging these expressions into the integrand of  $(J+\Lambda)$, we get $(J+\Lambda)\equiv  \D(\tilde{\rho}_0\|\rho_0)$ which is constant {\em} over the admissible flows satisfying (\ref{BBB3a})-({\ref{BBB3b}). Therefore, minimization with respect to $(\tilde{\rho},\rho)$ is not necessary.  We get that the optimal evolutions in $\D(\tilde{\rho}_0)\times\D(\rho_0)$ satisfy
\begin{equation}\label{steepest2}\frac{\partial}{\partial t}\left(\begin{matrix}\tilde{\rho}^*_t\\\rho^*_t\end{matrix}\right)-\nabla\cdot\left(\begin{matrix}\tilde{\rho}^*_t\nabla\log\left(\frac{\tilde{\rho}^*_t}{\rho^*_t}\right)\\-\rho^*_t\nabla\left(\frac{\tilde{\rho}^*_t}{\rho^*_t}\right)\end{matrix}\right)=0.
\end{equation}
We conclude that  the optimal controlled evolution for Problem \eqref{eq:BBB} is indeed the steepest (gradient) descent (\ref{steepest}).}
\end{proof}
\noindent
In view of (\ref{REFF0}), $\D(\tilde{\rho},\rho)$ is monotonically decreasing along $\{(\tilde{\rho}^*_t,\rho^*_t); 0\le t\le 1\}$.

\section{Recap and outlook}

In this work, we developed a unified control perspective on gradient descent dynamics in both the Euclidean and the Wasserstein setting, and through that lens, extended to a product-space gradient flow of a relative entropy functional. Specifically, first in finite dimensions, we explained how classical gradient descent and weighted variants can be viewed as optimizers of a convex action with a terminal potential--this Lagrangian/control framing may suggest principled design choices in the cost. Next, we recast the Fokker–Planck gradient flow of the free energy as the solution of an analogous optimal control problem, highlighting the correspondence between dissipation on the side of the PDE formulation with the Euler–Lagrange structure of the control problem. Finally, we identified the gradient of the relative entropy functional on a product Wasserstein space and derived a coupled continuity system whose equal-and opposite fluxes yield steepest descent for the functional. The formalism sharpens the single-flow structure and suggests coordinated transport laws where two communicating densities meet in the middle. 

In closing, we note interest in a set of minimal assumptions that guarantee contraction in the product-space flow of the relative entropy functional as well as error estimates and convergence rates. Extensions to $N$-fold product spaces, with suitably generalized notions of divergences, would be of interest in flows of multiple interacting populations. Finally, efficient solvers for the opposite-flux system would be of value in density feedback control and swarm coordination. Potential connections to small-noise limits of stochastic bridges may once again underscore links between entropy regularization, Fisher information, and control costs, as in the now classical stochastic control formulation of Schr\"dinger bridges \cite{CGP3}.

\begin{appendix}

\section{Weighted gradient descents}
In this appendix, we consider ``weighted" versions of the calculus of variations problem of Section \ref{GD}.
\subsection{Newton's method}

A minor modification of what we have seen in the previous subsection allows to derive Newton's descent as an optimal controlled evolution. Let $f$ be a $C^2$ function on $\R^n$ that we would like to minimize. Let $H_{f}(x)$ denote the {\em Hessian} of $f$ at $x$. Assume $H_{f}(x)$ to be positive definite on $\R^n$. As before, let ${\cal X}$ denote the family of $C^1$, $\R^n$-valued paths such that $x(0)=x_0$.  Let $\cal U$ be the family of continuous, $\R^n$-valued control functions. Consider the problem
\[{\rm  min}_{(x,u)\in{\cal X}\times {\cal U}}\left[\int_0^T\left(\frac{1}{2}\nabla f(x(t))'H_{f}(x(t))^{-1}\nabla f(x(t))+\frac{1}{2} u(t)'H_{f}(x(t))^{-1}u(t)\right)dt +f(x(T))\right]\]
subject to
\[\dot{x}=u, \quad x(0)=x_0.\]
For $S(x)$  a $C^1$ function on $\R^n$, let $\Lambda^S$ be as in the previous subsection.  The {\em unconstrained} minimization of $J+\Lambda^S$ on $\cal X\times\cal U$, for each fixed path $x\in\cal X$, now yields
\[u_x^*(t,x(t))=-H_{f}(x(t))^{-1}\nabla S(x(t)).\]
Plugging this back into $J+\Lambda^S$, we get
\begin{eqnarray}\nonumber(J+\Lambda^S)(x,u_x^*)&=&\int_0^T\left[\frac{1}{2}\nabla f(x(t))'H_{f}(x(t))^{-1}\nabla f(x(t))-\frac{1}{2}\nabla f(x(t))'H_{f}(x(t))^{-1}\nabla f(x(t))\right]dt\\&+&f(x(T))-S(x(T))+S(x(0)).
\end{eqnarray}
If $S$ satisfies
\[S(x)=f(x)\]
$(J+\Lambda^{f})(x,u)\equiv f(x(0))$ which is constant over $\cal X$. Thus, minimization with respect to $x\in \cal X$ of $J+\Lambda^{f}$ is not necessary. Let $x^*$ satisfy
\begin{eqnarray} \dot{x}^*(t)=-H_{f}(x(t))^{-1}\nabla f(x^*),\\x^*(0)=x_0.
\end{eqnarray}
Then, the pair $(x^*,u^*)$ solves the control problem. Now we compute the convective derivative (derivative along stream lines) of $f$ along the optimal path. We get
\[\frac{d f}{dt}=\nabla f\cdot\dot{x}^*=-\nabla f^TH_{f}(x^*(t))^{-1}\nabla f.\]
Thus, $f$ decreases along the optimal path and $ \dot{x}^*(t)=-H_{f}(x^*(t))^{-1}\nabla f(x^*(t))$ is the Newton descent flow for $f$ starting at $x_0$ and ending at $x^*(T)$.

\subsection{Stochastic gradient descent}

Let $\{\Pi(t,\omega); 0\le t\le T\}$ be a stochastic process taking values in $n\times n$, diagonal matrices with a fixed (say $32$) number of ones and $n-32$ zeros on the main diagonal.
Let ${\cal X}$ denote the family of stochastic processes with  $C^1$, $\R^n$-valued paths on $[0,T]$ such that $x(0)=x_0, a.s.$  Let $\cal U$ be the family of random $\R^N$-valued control functions with continuous trajectories on $[0,T]$.  Let $f$ be the $C^1$ function on $\R^N$ with Lipschitz gradient that we would like to minimize. For  vectors $u,v\in\R^n$ and $M$ symmetric, nonnegative definite, we write $\langle u,v\rangle_M$ instead of $\langle Mu,v\rangle$. Similarly for the norm.  
Consider  problem\[{\rm min}_{(x,u)\in{\cal X}\times {\cal U}}J(x,u)={\rm  min}_{(x,u)\in{\cal X}\times {\cal U}}\E\left\{\int_0^T \left[\frac{1}{2}\|\nabla f(x(t))\|_{\Pi(t)}^2+\frac{1}{2}\|u(t)\|_{\Pi(t)}^2\right]dt +f(x(T))\right\}\]subject to\[\dot{x}(t)=\Pi(t)u(t), \quad x(0)=x_0 \;\;{\rm a.s.}\]
As Lagrange functional, we can now take
\[\Lambda^S(x,u)=\E\left\{\int_0^T\langle u,\nabla S\rangle_{\Pi(t)}(t,x)dt-S(x(T))+S(x(0))\right\}.\]
For a fixed process $x\in\cal X$, the pointwise minimization of the integrand of $J+\Lambda^S$ yields
\begin{equation}\label{optcond}
\Pi(t)u^*(t)=-\Pi(t)\nabla S(x(t)), \;{\rm a.s.}
\end{equation}
Replacing $\Pi(t)u(t)$ with $-\Pi(t)\nabla S(x(t))$ in $J+\Lambda^S$, we get
\[
J+\Lambda^S= \E\left\{\int_0^T \left[\frac{1}{2}\|\nabla f(x(t))\|_{\Pi(t)}^2-\frac{1}{2}\|\nabla S(x(t)\|_{\Pi(t)}^2\right]dt +f(x(T))-S(x(T))+S(x(0))\right\}
\]
Choosing $S(x)=f(x)$, $J+\Lambda^f$ becomes the constant $f(x(0))$ over ${\cal X}$. Then $x^*$ satisfying
\begin{eqnarray} \label{SGD}\dot{x}^*(t)=-\Pi(t)\nabla f(x^*),\\x^*(0)=x_0.
\end{eqnarray}
together with any $u^*\in{\cal U}$ satisfying (\ref{optcond}) solves the optimal control problem. Notice that $(\ref{SGD})$ represents the continuous time counterpart of a stochastic gradient descent with $\Pi(t)$ determining at each time the {\em mini-batch}. Finally, observe that
\[
\frac{d f}{dt}(x^*(t))=\nabla f (x^*(t))\cdot\dot{x}^*(t)=-\|\nabla f(x^*(t))\|_{\Pi(t)}^2.
\]
This shows once more that the values of $f$ are decreasing along the optimal path.
\end{appendix}

\end{document}